\newtheorem{theorem}{Theorem}[section]
\newtheorem{lemma}[theorem]{Lemma}
\newtheorem{definition}[theorem]{Definition}
\begin{document}
	
\title{Ramsey number of 1-subdivisions of transitive tournaments}
\author{
Nemanja Dragani\'c \thanks{
Department of Mathematics, ETH, Z\"urich, Switzerland. Research supported in part by SNSF grant 200021\_196965.
\emph{E-mails}: \textbf{\{nemanja.draganic,david.munhacanascorreia,benjamin.sudakov\}@math.ethz.ch}.
}
\and
David Munh\'a Correia\footnotemark[1]
\and
Benny Sudakov\footnotemark[1]
\and
Raphael Yuster \thanks{Department of Mathematics, University of Haifa, Haifa 3498838, Israel. Email: raphael.yuster@gmail.com\;.}
}
	
\date{}
	
\maketitle
	
\setcounter{page}{1}
	
\begin{abstract}
The study of problems concerning subdivisions of graphs has a rich history in extremal combinatorics. 
Confirming a conjecture of Burr and Erd\H{o}s, Alon proved in 1994 that subdivided graphs have linear Ramsey numbers. Later, Alon, Krivelevich and Sudakov showed that every $n$-vertex graph with at least $\varepsilon n^2$ edges contains a $1$-subdivision of the complete graph on $c_{\varepsilon}\sqrt{n}$ vertices, resolving another old conjecture of Erd\H{o}s. In this paper we consider the directed analogue of these problems and show that every tournament on at least $(2+o(1))k^2$ vertices contains the 1-subdivision of a transitive tournament on $k$ vertices. This
is optimal up to a multiplicative factor of 4 and confirms a conjecture of Gir\~ao, Popielarz and Snyder. 
	
\vspace*{3mm}
\noindent
{\bf AMS subject classifications:} 05C20, 05C35, 05D40\\
{\bf Keywords:} Ramsey numbers; subdivisions; tournament

\end{abstract}

\section{Introduction}\label{sec:introduction}

Given a graph $G$, a \emph{subdivision of $G$} is a graph obtained by replacing its edges with internally vertex-disjoint paths of arbitrary length. More specifically, the \emph{$1$-subdivision of $G$} is the subdivision in which the length of these paths is $2$. Problems concerning subdivisions of graphs have been extensively studied in extremal combinatorics.

One of the central topics in discrete mathematics is the study of Ramsey numbers. The \emph{Ramsey number}, $r(G)$, of a graph $G$ is the smallest number $N$ such that every $2$-coloring of $K_N$ contains a monochromatic copy of $G$. A well known conjecture of Erd\H{o}s and Burr \cite{burr1975} was that subdivisions of graphs in which each subdivision path is of length at least 2, have Ramsey number which is linear in the number of vertices. Alon \cite{alon1994subdivided} resolved this in 1994, showing that every graph on $n$ vertices in which no two vertices of degree at least 3 are adjacent has Ramsey number at most $12n$. Later, Alon, Krivelevich and Sudakov \cite{alon2003turan} proved a stronger density-type result for cliques, showing that every $n$-vertex graph with at least $\varepsilon n^2$ edges contains the $1$-subdivision of a complete graph on $c_{\varepsilon}\sqrt{n}$ vertices. This proved an old conjecture of Erd\H{o}s \cite{erdos1979problems}.

In this paper, we study analogues of these problems in the framework of directed graphs. Notice that in this context it is only sensible to consider embedding \emph{acyclic} graphs in host digraphs, since in general the host digraph might not contain a directed cycle. Therefore, we will only consider subdivisions of the transitive tournament $T_k$ on $k$ vertices. Secondly, it is not possible to give a density-type statement as it was done in the result of Alon, Krivelevich and Sudakov \cite{alon2003turan}. Indeed, note that an orientation of the edges of the Tur\'an graph $T(n,k)$ with $k$ parts in which the direction of an edge between two parts conforms to a previously specified ordering of the parts, does not even contain a path of length $k$. Hence, only in very dense host directed graphs can we hope to embed an arbitrary subdivision of $T_k$, let alone the $1$-subdivision (Scott \cite{scott2000subdivisions}, in fact, proved that one can find a non-specified subdivision of $T_k$ inside of every $n$-vertex digraph with more edges than $T(n,k)$). This naturally leads to the following Ramsey-type question: How many vertices should a tournament have in order to contain the $1$-subdivision of $T_k$? 

The \emph{oriented Ramsey number}, $\overrightarrow{r}(H)$, of an oriented graph $H$ is the smallest number $N$ such that every tournament on $N$ vertices contains a copy of $H$. The study of oriented Ramsey numbers goes back 60 years to the work of Stearns, and Erd\H{o}s and Moser, who showed that $\overrightarrow{r}(T_k)$ is exponential in $k$ (see e.g., \cite{FHW} and its references for the history of this subject and some more recent results). The above question then asks for the oriented Ramsey number of the $1$-subdivision of $T_k$. This problem was raised by Gir\~ao, Popielarz and Snyder \cite{girao2021subdivisions}, who gave an upper bound of $O(k^2\log^3k)$. They also conjectured that 1-subdivisions of $T_k$ actually have linear oriented Ramsey number. We prove this conjecture.
\begin{theorem}\label{t:main}
Every tournament on $2k^2(1+o(1))$ vertices contains the $1$-subdivision of $T_k$.
\end{theorem}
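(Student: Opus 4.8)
The plan is to fix the $k$ ``branch'' vertices $x_1,\dots,x_k$ first and then, for each pair $i<j$, to choose a distinct ``connector'' $y_{ij}$ with $x_i\to y_{ij}\to x_j$, avoiding the branch vertices. Once the $x_i$ are fixed, choosing the connectors is exactly the problem of finding a system of distinct representatives for the sets
\[
S_{ij}\ :=\ N^+(x_i)\cap N^-(x_j)\ \setminus\ \{x_1,\dots,x_k\},\qquad 1\le i<j\le k,
\]
so by Hall's theorem it suffices to arrange that $\bigl|\bigcup_{(i,j)\in\mathcal F}S_{ij}\bigr|\ge|\mathcal F|$ for every family $\mathcal F$ of pairs. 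Thus the whole problem reduces to picking the branch vertices and their order so that this Hall condition holds.

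The basic tool I would use to control $S_{ij}$ is the elementary estimate that if the branch vertices are ordered by \emph{decreasing out-degree}, then for $i<j$
\[
|S_{ij}|\ =\ |N^+(x_i)\cap N^-(x_j)|\ \ge\ d^+(x_i)-d^+(x_j)-1 ,
\]
since $N^+(x_i)$ splits into $N^+(x_i)\cap N^+(x_j)$ (of size at most $d^+(x_j)$), $N^+(x_i)\cap N^-(x_j)$, and possibly $\{x_j\}$. Hence if one can choose branch vertices whose out-degrees are pairwise far apart --- say with consecutive gaps at least $k$, so that $d^+(x_i)-d^+(x_j)\ge k(j-i)$ --- then $|S_{ij}|\ge k(j-i)-1$. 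For a family $\mathcal F$ with largest index gap $g=\max_{(i,j)\in\mathcal F}(j-i)$ one has $|\mathcal F|\le\sum_{d=1}^{g}(k-d)=gk-\binom{g+1}{2}$, while some $(i^*,j^*)\in\mathcal F$ has $j^*-i^*=g$, so $\bigl|\bigcup_{\mathcal F}S_{ij}\bigr|\ge|S_{i^*j^*}|\ge gk-1\ge|\mathcal F|$ and Hall holds. Such well-separated branch vertices exist whenever the out-degree sequence is spread out (the required span $d^+(x_1)-d^+(x_k)\ge k(k-1)$ is comfortably available since out-degrees range in $[0,n-1]$ with $n\approx2k^2$).

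The remaining, and I expect hardest, case is when the out-degrees are \emph{not} spread out, i.e.\ a large part of the tournament consists of vertices of almost equal out-degree and is therefore close to regular, where the degree-difference bound is useless. Here one has to choose the branch vertices generically inside this near-regular part and argue directly: in a near-regular tournament $\sum_v d^+(v)d^-(v)$ is close to its maximum $\approx n^3/4$, so the ``middle counts'' $|\{v:x_i\to v\to x_j\}|$ are typically of order $n/4$, and this is exactly where $n\approx 2k^2$ is needed, since one wants the unions $\bigcup_{\mathcal F}S_{ij}$ to exceed $\binom k2\approx k^2/2$. Concretely, one must show that for a good choice of branch vertices, for \emph{every} family $\mathcal F$ the number of vertices lying in no $S_{ij}$ with $(i,j)\in\mathcal F$ --- precisely the vertices $v$ whose set $A_v:=\{i:x_i\to v\}$ is ``closed'' under $\mathcal F$ (contains no $\mathcal F$-edge leaving it) --- is at most $n-k-|\mathcal F|$; the extremal families turn out to be the complete ones $\binom{S}{2}$, whose closed sets are just the up-sets of $S$ and hence few, so this count is small. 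Combining the two regimes into a single argument (e.g.\ via a threshold on the concentration of the out-degree distribution) and making the near-regular estimate hold simultaneously for all $2^{\binom k2}$ families $\mathcal F$ is the technical crux; the $(1+o(1))$ slack, and the fact that the bound is only optimal up to a factor of $4$, are what make this affordable.
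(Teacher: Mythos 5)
Your reduction to a system of distinct representatives via Hall's condition is a reasonable reformulation (the paper's Lemma~\ref{lem:greedy} is essentially a constructive, ``staircase'' version of the same thing: ordering the pairs so that the $\ell$-th pair has at least $\ell$ candidates), and your first regime --- branch vertices with well-separated out-degrees --- is handled correctly by the bound $|S_{ij}|\ge d^+(x_i)-d^+(x_j)-1$. But that regime is essentially never available: to get $k$ vertices with consecutive out-degree gaps of order $k$ you need the out-degree sequence to span a range of order $k^2$, whereas a tournament on $n\approx 2k^2$ vertices can be regular, with all out-degrees equal to $(n-1)/2$. So the ``near-regular'' case is not an exceptional case to be patched in at the end; it is the whole problem, and your treatment of it is a heuristic sketch that you yourself flag as the unproved ``technical crux.'' In particular, near-regularity of the degree sequence gives no pointwise control on $|N^+(x_i)\cap N^-(x_j)|$: the identity $d^+(u)=|N^+(u)\cap N^+(v)|+|N^+(u)\cap N^-(v)|+[v\in N^+(u)]$ only yields $|N^+(u)\cap N^-(v)|\ge d^+(u)-d^+(v)-1$, which is vacuous for equal degrees, and individual ordered pairs in a regular tournament can indeed have very few common ``middle'' vertices. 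The averaging statement that $\sum_v d^+(v)d^-(v)\approx n^3/4$ controls only the average of $|N^+(u)\cap N^-(v)|$ over pairs, not the tail, and your claim that the extremal families for Hall's condition are the complete ones is asserted, not proved (and would still have to be verified simultaneously for all $2^{\binom{k}{2}}$ families, as you note).

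The missing ingredient is a structural fact that rules out many badly connected pairs even in regular tournaments. The paper's route is: (i) for every vertex $u$ there exists $v$ with $c(u,v)=\max\{|N^+(u)\cap N^-(v)|,|N^+(v)\cap N^-(u)|\}\ge (n-3)/4$ (Lemma~\ref{l:0}, by taking $v$ of minimum out-degree inside $N^+(u)$); hence the auxiliary graph of pairs with $c(u,v)\le t$ has maximum degree at most $4t+2$ (Lemma~\ref{l:1}); (ii) a random base set $A$ of size slightly above $k$, chosen with probability $p\approx 1/(2k)$, then contains, for every $t$, strictly fewer than $t$ pairs with $c(u,v)<t$ (a second-moment computation, made uniform over $t$ by a multiplicative net); (iii) ordering the pairs of the base set by increasing $c(u,v)$ (and the base vertices by decreasing out-degree, so that $c$ controls the correct direction $N^+(x_i)\cap N^-(x_j)$) then verifies the greedy/Hall condition directly. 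Without something playing the role of (i) and (ii), your argument has no leverage in the regular case, so as it stands the proof is incomplete.
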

\noindent The above result is optimal up to a factor of 4, since the $1$-subdivision of $T_k$ has at least $(1+o(1))k^2/2$ vertices. In the next section, we give some preliminaries and then prove the result in Section 3. We finish with some brief concluding remarks.

\section{Preliminary results and proof ideas}
We mainly use standard terminology. For a directed graph $G$ and a vertex $v \in V(G)$, let
$N^+(v)$ and $N^-(v)$ denote the set of out-neighbors and the set of in-neighbors of $v$ in $G$, respectively.
The out-degree of $v$ is $d^+(v)=|N^+(v)|$ and the in-degree of $v$ is $d^-(v)=|N^-(v)|$.
An edge from $u$ to $v$ in a directed graph is denoted by $(u,v)$ and an edge between $u$ and $v$ in 
an undirected graph is denoted by $uv$. Let $T_k$ be the transitive tournament on vertices $\{v_1,\ldots,v_k\}$
where $(v_i,v_j) \in E(T_k)$ for $1 \le i < j \le k$. Considering its $1$-subdivision $H_k$, we call $v_1,\ldots,v_k$
the {\em base vertices} and for every $1 \le i < j \le k$ there is a unique vertex $w_{i,j}$ such that
$(v_i,w_{i,j})$ and $(w_{i,j},v_j)$ are the only edges incident with $w_{i,j}$. We call $w_{i,j}$ the
{\em subdivision vertex} connecting $v_i$ to $v_j$. 

Given a pair of vertices in a tournament, it will be handy for us to quantify how well the pair is connected by directed paths of length two. This is captured in the following definition.

\begin{definition}\label{def:connections}
For two vertices $u,v$ of a directed graph, we define 
$$
c(u,v)= \max \{|N^+(u) \cap N^-(v)|\,,\,|N^+(v) \cap N^-(u)|\}\;.
$$
\end{definition}

Observe that if $u,v$ are vertices of a tournament and $d^+(u) \ge d^+(v)$, then clearly $c(u,v) \le |N^+(u) \cap N^-(v)|+1$. The following simple lemma shows that for every vertex in a tournament there always exists another vertex such that the pair they form is well connected in the sense of Definition~\ref{def:connections}.
\begin{lemma}\label{l:0}
	Let $T$ be a tournament on $n$ vertices. Then for every vertex $u \in V(T)$ there exists a vertex
	$v \in V(T)$ such that $c(u,v) \ge (n-3)/4$.
\end{lemma}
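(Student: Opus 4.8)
Fix the vertex $u$ and partition the remaining $n-1$ vertices according to their relation to $u$: let $A = N^+(u)$ and $B = N^-(u)$, so that $|A| + |B| = n-1$ and hence $\max\{|A|,|B|\} \ge (n-1)/2$. The plan is to find the promised $v$ inside whichever of $A$, $B$ is larger, using a simple averaging argument on the sub-tournament it induces.

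Suppose first that $|A| \ge (n-1)/2$. Consider the tournament $T[A]$ on the vertex set $A$. The sum of the in-degrees of its vertices equals $\binom{|A|}{2}$, so there is a vertex $v \in A$ whose in-degree within $T[A]$ is at least $(|A|-1)/2$. Every in-neighbor $w$ of $v$ inside $A$ satisfies $w \in N^+(u)$ (since $w \in A$) and $w \in N^-(v)$, so $w \in N^+(u) \cap N^-(v)$. Therefore
$$
c(u,v) \ge |N^+(u) \cap N^-(v)| \ge \frac{|A|-1}{2} \ge \frac{(n-1)/2 - 1}{2} = \frac{n-3}{4},
$$
as required. Note $v \ne u$ since $v \in A$.

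In the symmetric case $|B| \ge (n-1)/2$, one argues identically with out-degrees: the sum of out-degrees in $T[B]$ is $\binom{|B|}{2}$, so some $v \in B$ has out-degree at least $(|B|-1)/2$ within $T[B]$, and each such out-neighbor $w$ lies in $N^+(v) \cap N^-(u)$, giving $c(u,v) \ge (|B|-1)/2 \ge (n-3)/4$. Since $\max\{|A|,|B|\} \ge (n-1)/2$ always, one of the two cases applies and the lemma follows.

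There is essentially no serious obstacle here; the only points requiring a little care are the choice of which set to work in (dictated by which is larger) and tracking the off-by-one terms so that the bound comes out as $(n-3)/4$ rather than something slightly weaker. The key idea — that in any tournament the maximum in-degree (resp.\ out-degree) is at least half the number of vertices minus a constant — is exactly what produces the factor $1/4$ after halving $(n-1)/2$.
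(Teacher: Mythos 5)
Your proof is correct and follows essentially the same route as the paper: the paper also restricts to the larger of $N^+(u)$, $N^-(u)$ (via a "without loss of generality" on $d^+(u)$) and picks a vertex of minimum out-degree in the induced sub-tournament, which is just the extremal form of your averaging argument. The bounds and constants match exactly.
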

\begin{proof}
	Without loss of generality, assume that $d^+(u) \ge (n-1)/2$. Let $v$ be a vertex of minimum out-degree in $T[N^+(u)]$, i.e. the subtournament of $T$ induced by
	$N^+(u)$. Then $v$ has out-degree at most $(d^+(u)-1)/2$ in $T[N^+(u)]$.
	Thus, $|N^+(u) \cap N^-(v)| \ge (d^+(u)-1)-(d^+(u)-1)/2 \ge (n-3)/4$.
\end{proof}
\noindent
We now define, for a tournament $T$ and each $t \ge 1$, the undirected graph $T_{\le t}$ on the vertex set $V(T)$ to consist of those edges $uv$ such that $c(u,v) \le t$. The previous lemma trivially indicates that this graph must be sparse.
\begin{lemma}\label{l:1}
	The maximum degree of $T_{\le t}$ is at most $4t+2$.
\end{lemma}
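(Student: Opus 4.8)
The plan is to mimic the proof of Lemma~\ref{l:0}, but run the argument inside the neighborhood of a fixed vertex. Fix $u \in V(T)$ and let $S \subseteq V(T)$ be the set of neighbors of $u$ in $T_{\le t}$; write $d = |S|$, the quantity we want to bound. By the definition of $T_{\le t}$ and of $c(\cdot,\cdot)$ (Definition~\ref{def:connections}), every $v \in S$ satisfies both $|N^+(u) \cap N^-(v)| \le t$ and $|N^+(v) \cap N^-(u)| \le t$. Since $S = (N^+(u) \cap S) \cup (N^-(u) \cap S)$, at least one of these two parts has size at least $d/2$.

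Suppose first $A := N^+(u) \cap S$ has $|A| \ge d/2$. Let $v \in A$ be a vertex of minimum out-degree in the subtournament $T[A]$; then its out-degree in $T[A]$ is at most $(|A|-1)/2$, hence its in-degree in $T[A]$ is at least $(|A|-1)/2$. Each in-neighbor $w$ of $v$ inside $A$ lies in $N^+(u)$ (as $w \in A$) and in $N^-(v)$, so $N^+(u) \cap N^-(v)$ contains all of them, giving $|N^+(u) \cap N^-(v)| \ge (|A|-1)/2 \ge d/4 - 1/2$. But $v \in S$, so $t \ge c(u,v) \ge |N^+(u) \cap N^-(v)| \ge d/4 - 1/2$, i.e.\ $d \le 4t+2$. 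The other case, $|N^-(u) \cap S| \ge d/2$, is symmetric: with $B := N^-(u) \cap S$ and $v \in B$ a vertex of minimum \emph{in}-degree in $T[B]$, its out-neighbors inside $B$ all lie in $N^+(v) \cap N^-(u)$, so $|N^+(v) \cap N^-(u)| \ge (|B|-1)/2 \ge d/4 - 1/2$, and again $t \ge c(u,v) \ge d/4 - 1/2$. In both cases $d \le 4t+2$.

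There is no serious obstacle here; the argument is a direct reuse of the minimum-out-degree trick from Lemma~\ref{l:0}. The only points requiring a little care are bookkeeping: choosing the majority side among the out- and in-neighbors of $u$ in $S$, and correspondingly selecting a vertex of minimum out- or in-degree so that the length-two paths one extracts actually pass through $u$ (which is what forces them into $N^+(u)\cap N^-(v)$ or $N^+(v)\cap N^-(u)$), together with the same small additive slack that already appeared in Lemma~\ref{l:0}. One could avoid the case split by applying the Lemma~\ref{l:0}-style estimate to $T[S\cup\{u\}]$ after possibly reversing all edges, but handling the two cases separately is already short.
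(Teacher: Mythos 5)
Your proof is correct and is essentially the paper's argument: the paper simply applies Lemma~\ref{l:0} to the subtournament $T[X\cup\{v\}]$ induced on the closed neighborhood in $T_{\le t}$ (noting that $c(\cdot,\cdot)$ can only increase when passing back to $T$), whereas you inline that lemma's minimum-out-degree trick with an explicit case split on the majority side. The content and the resulting bound $4t+2$ are the same, as you yourself observe in your closing remark.
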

\begin{proof}
	Consider a vertex $v$ and let $X$ be its set of neighbors in $T_{\le t}$.
	Then we must have $|X| \le 4t+2$ since otherwise the sub-tournament $T[X \cup \{v\}]$ has more than $4t+3$
	vertices and thus by Lemma \ref{l:0}, there exists a vertex $u \in X$ such that $c(u,v) > t$,
	contradicting the definition of $T_{\le t}$.
\end{proof}


Let us now outline the main ideas behind the proof of Theorem \ref{t:main}. Given a tournament $T$, we will take a random subset $A$ of vertices of $T$ of expected size slightly larger than $k$. We then show that in fact for every $t\leq |V(T)|$ we do not expect too many pairs $(u,v)$ in $A$ which have $c(u,v)<t$. This will allow us, after removing some vertices from $A$, to embed $H_k$ into $T$ by using the remaining vertices $A'\subseteq A$ as base vertices. We will employ a greedy embedding strategy by connecting the pairs in $A'$ one by one, giving priority to the pairs which have fewer possible connections. The next simple lemma describes the framework of our greedy embedding strategy, and is tailored for the use on the set $A'$ which we will be able to find.

\begin{lemma}\label{lem:greedy}
    Let $T$ be a tournament and let $A'=\{v_1,\ldots, v_k\}$ be a subset of its vertices such that we can order all pairs $e_1,e_2\ldots e_{\binom{k}{2}}$ contained in $A'$, so that for every $t\leq \binom{k}{2}$ for the pair $e_t=(v_i,v_j)$ (where $i<j$) it holds that $|N^+(v_i)\cap N^-(v_j)\setminus A'|\geq t$. Then $T$ contains $H_k$. 
\end{lemma}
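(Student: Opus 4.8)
The plan is to embed $H_k$ greedily, processing the pairs of $A'$ in the given order and, for each pair, picking an as-yet-unused subdivision vertex. First I would fix the embedding of the base vertices by mapping the $i$-th base vertex of $H_k$ to $v_i$; then all that remains is, for each $1 \le i < j \le k$, to choose a vertex $w_{i,j}$ to play the role of the subdivision vertex connecting $v_i$ to $v_j$. Since we are looking for a (not necessarily induced) copy of $H_k$, it suffices that these vertices be pairwise distinct, lie outside $A'$, and satisfy $w_{i,j} \in N^+(v_i) \cap N^-(v_j)$: the resulting map is then an injective homomorphism from $H_k$ into $T$, i.e.\ a copy of $H_k$.

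To produce such vertices, I would process the pairs in the order $e_1, e_2, \ldots, e_{\binom{k}{2}}$ guaranteed by the hypothesis, maintaining the invariant that after step $t$ we have chosen pairwise distinct vertices $x_1, \ldots, x_t \notin A'$ with $x_s \in N^+(v_i) \cap N^-(v_j)$ whenever $e_s = (v_i, v_j)$ and $i < j$. At step $t$, writing $e_t = (v_i, v_j)$ with $i<j$, the set $N^+(v_i) \cap N^-(v_j) \setminus A'$ has size at least $t$ by assumption, while at most $t-1$ of its elements have already been used as $x_1, \ldots, x_{t-1}$; hence at least one vertex of this set is still available, and we take it to be $x_t$. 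This preserves the invariant, so the process runs to completion.

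After step $\binom{k}{2}$ we have pairwise distinct vertices $x_1, \ldots, x_{\binom{k}{2}} \notin A'$, and setting $w_{i,j} := x_s$ for the unique index $s$ with $e_s = (v_i, v_j)$ yields subdivision vertices with exactly the properties required in the first paragraph; hence $T$ contains $H_k$. The only point that needs checking is the counting step that guarantees an available vertex at every stage, and this is immediate from the hypothesis on the ordering, so there is essentially no obstacle in this lemma itself — the real work has been front-loaded into constructing a set $A'$ together with an ordering of its pairs having the stated property, which is carried out elsewhere in the proof of Theorem~\ref{t:main}.
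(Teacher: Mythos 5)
Your proposal is correct and is exactly the paper's argument: a greedy selection of subdivision vertices in the given order of pairs, using the fact that at step $t$ at most $t-1$ vertices have been consumed while $|N^+(v_i)\cap N^-(v_j)\setminus A'|\geq t$. You simply spell out the invariant and the counting more explicitly than the paper's one-line proof does.
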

\begin{proof}
We let $A'$ be the base set of the copy of $H_k$ which we want to find, and we greedily find the connections in $V(T)\setminus A'$ for each pair of vertices following the order given in the statement, and noting that by assumption there is at least one free vertex which we can use for the current pair.
\end{proof}

 \section{Randomised embedding of $H_k$}\label{sec:proof}
Throughout the rest of this section we assume, whenever necessary, that $k$ is sufficiently large.
Let $T$ be tournament with $K=2(k^2+k^{1.9})$ vertices. 
The following lemma shows that $T$ contains a set of vertices which we will later use in order to apply Lemma~\ref{lem:greedy} and complete the proof of 
Theorem \ref{t:main}.

\begin{lemma}\label{lem:random subset}
    $T$ contains a subset $A$ of vertices such that the following hold:
    \begin{itemize}
        \item[$({\rm P}1)$]  $|A| \ge k+ 2k^{0.8}$.
        \item[$({\rm P}2)$] Let $q_t$ be the number of distinct pairs $u,v$ in $A$ for which $c(u,v)<t$. For all integers $t$ with $k^{0.8} \le t \le K$ it holds that $q_t \le t-\frac{t}{32k^{0.1}}$.
\item[$({\rm P}3)$] For all pairs of distinct vertices $u,v \in V(T)$ with
$|N^+(u) \cap N^-(v)|=t \ge k^{0.7}$ it holds that $$|N^+(u) \cap N^-(v) \cap A| \le t/(2k) + t^{3/4}-1.$$
    \end{itemize}
\end{lemma}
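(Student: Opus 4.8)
The plan is to take $A$ to be a random subset of $V(T)$ where each vertex is included independently with probability $p = (k + 3k^{0.8})/K$, and to show that each of the three properties holds with probability bounded away from zero (indeed, failing with probability $o(1)$), so a suitable $A$ exists. First, for property (P1), by the choice of $p$ we have $\mathbb{E}|A| = k + 3k^{0.8}$, and since $|A|$ is a sum of independent indicators with large mean, a Chernoff bound shows $|A| \ge k + 2k^{0.8}$ fails with probability $o(1)$.

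For property (P3), fix a pair $u,v$ with $|N^+(u)\cap N^-(v)| = t \ge k^{0.7}$. The quantity $|N^+(u)\cap N^-(v)\cap A|$ is again a sum of $t$ independent indicators with mean $pt = t(k+3k^{0.8})/K \le t/(2k) + O(t k^{-1.2})$. A Chernoff/Bernstein bound gives that it exceeds $pt + t^{3/4} - 1 \ge t/(2k) + t^{3/4} - 1$ with probability at most $\exp(-\Omega(t^{1/2}))$ (since the deviation $t^{3/4}$ dwarfs the standard deviation $\sqrt{pt}\sim\sqrt{t/(2k)}$ once $t \ge k^{0.7}$). There are at most $K^2$ such pairs, so a union bound shows (P3) fails with probability at most $K^2 \exp(-\Omega(k^{0.35})) = o(1)$.

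The main obstacle is property (P2), since here we must control, simultaneously for all $t$ in a wide range, the number $q_t$ of pairs in $A$ that are poorly connected. The key input is Lemma~\ref{l:1}: the pairs $u,v$ of $V(T)$ with $c(u,v) < t$ form the edge set of the graph $T_{\le t-1}$, which has maximum degree at most $4(t-1)+2 \le 4t$, hence at most $2tK$ edges in total. Each such edge survives into $A$ with probability $p^2$, so $\mathbb{E}[q_t] \le 2tK p^2 = 2tp(k+3k^{0.8}) \le 2t \cdot (1 + o(1)) \cdot (k + 3k^{0.8})^2 / K$. Plugging in $K = 2(k^2 + k^{1.9})$ gives $\mathbb{E}[q_t] \le t(1 + O(k^{-0.2}))(k+3k^{0.8})^2/(k^2+k^{1.9})$; a short computation shows the ratio $(k+3k^{0.8})^2/(k^2+k^{1.9})$ is at most $1 - \Omega(k^{-0.2})$, so $\mathbb{E}[q_t] \le t - \Omega(t k^{-0.2})$, comfortably below the target $t - t/(32k^{0.1})$ \emph{in expectation}. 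To upgrade this to a high-probability statement holding for all $t$ at once, I would first restrict attention to a polynomial-size set of ``checkpoint'' values of $t$ (e.g. all integers, or a geometric net refined appropriately), applying concentration at each: $q_t$ is a sum over edges of $T_{\le t-1}$ of dependent indicators, but it is a function of the independent vertex-indicators with bounded differences (adding one vertex to $A$ changes $q_t$ by at most its degree in $T_{\le t-1}$, i.e. at most $4t$), so a Chernoff-type bound for sums of indicators — or McDiarmid's inequality with a truncation handling the rare event that $|A|$ is abnormally large — gives deviation $O(t^{1/2+o(1)})$, which is absorbed by the $\Omega(t k^{-0.2})$ slack whenever $t \ge k^{0.8}$; a union bound over the $\mathrm{poly}(K)$ checkpoints then finishes (P2). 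Monotonicity of $q_t$ in $t$ lets one interpolate between checkpoints if a coarse net is used. Finally, combining the three union bounds, all of (P1)--(P3) hold simultaneously with probability $1 - o(1)$, so the desired $A$ exists.
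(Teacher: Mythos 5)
Your overall strategy (a binomial random subset with $\mathbb{E}|A|$ slightly above $k$, first moments via the max-degree bound of Lemma~\ref{l:1}, Chernoff for (P1) and (P3), a net plus monotonicity for (P2)) is the same as the paper's, and your treatment of (P1) and (P3) is essentially sound (the inequality $pt+t^{3/4}-1\ge t/(2k)+t^{3/4}-1$ should be reversed, but that is the direction you actually need and it does hold for your $p$). One of your asymptotic claims for (P2) is off, however: from $\mathbb{E}[q_t]\le t\bigl(1-\Omega(k^{-0.2})\bigr)$ you cannot conclude anything ``comfortably below'' $t-t/(32k^{0.1})$, since for large $k$ we have $t-\Omega(tk^{-0.2}) > t-t/(32k^{0.1})$; the inequality points the wrong way. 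The computation does work, but only because $(k+3k^{0.8})^2/(k^2+k^{1.9}) = 1-(1-o(1))k^{-0.1}$, i.e.\ the true slack in expectation is of order $tk^{-0.1}$, not $tk^{-0.2}$; you must carry the $k^{-0.1}$ term, exactly as the paper does with its bound $\mathbb{E}[q_t]\le t-t/(8k^{0.1})$.

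The genuine gap is the concentration step for (P2). The summands $X_{uv}$ are not independent, so a Chernoff bound does not apply directly, and McDiarmid's inequality with the worst-case Lipschitz constant you cite (adding a vertex changes $q_t$ by up to $4t$) over the $K=\Theta(k^2)$ vertex indicators gives a failure probability of the form $\exp\bigl(-\lambda^2/(K\cdot(4t)^2)\bigr)$, which with the required $\lambda=\Theta(tk^{-0.1})$ is $\exp(-\Theta(k^{-2.2}))$ --- i.e.\ no bound at all; truncating on the event that $|A|$ is large does not repair this, since McDiarmid's constants are worst-case over single-coordinate changes. Moreover, the claimed deviation $O(t^{1/2+o(1)})$ is false for large $t$: the variance of $q_t$ is of order $t+t^2/k$ (the at most $(2t+1)K(8t+2)$ dependent ordered pairs of edges, each contributing $p^3$, dominate once $t\gg k$), so the standard deviation is $\Theta(t/\sqrt{k})$ when $t$ is of order $k^2$. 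What actually works --- and what the paper does --- is a second-moment argument: bound $\mathrm{Var}[q_t]\le t+7t^2/k$ using the bounded degree of $T_{\le t}$, then apply Chebyshev, which yields failure probability only $O(k^{-0.4})$ per value of $t$. That is far too weak for a union bound over $\mathrm{poly}(K)=\Theta(k^2)$ checkpoints; one is forced to take a geometric net of size $O(k^{0.2})$ (successive ratios $1+1/(32k^{0.1})$) and use monotonicity of $q_t$ to interpolate, absorbing the multiplicative loss into the gap between $t-t/(16k^{0.1})$ and $t-t/(32k^{0.1})$. You mention the net and the monotonicity as a fallback, but without committing to the variance computation and to a net sparse enough for Chebyshev-level failure probabilities, the proof of (P2) as proposed does not go through.
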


\begin{proof}

Select a random subset $A \subseteq V(T)$ where each $v \in V(T)$ is independently chosen with probability
$p=\frac{1}{2(k+\frac{3}{4}k^{0.9})}.$ For each of the three listed properties in the statement, we show that each individual one holds with probability more than $2/3$ for the randomly chosen set $A$, thus completing the proof of the lemma. 

\textbf{Property $({\rm P}1)$.} Notice that $|A| \sim \text{Bin}(K,p)$ and so its expectation is $Kp$, which satisfies
$2k \ge Kp \ge k+k^{0.9}/8$, and its variance is $Kp(1-p) \le 2k$. Thus, by Chebyshev's inequality,
$$
\mathbb{P}\left(\left||A| - Kp \right| > \frac{k^{0.9}}{16}\right) < \frac{1}{3}\;.
$$
Hence the first property holds with probability more than $2/3$.



\textbf{Property $({\rm P}2)$.} Fix an integer $t$ with  with $k^{0.8} \le t \le K$. Notice that $q_t$ is the number of edges of
$T_{\le t}$ with both endpoints in $A$. Hence, $q_t = \sum_{uv \in E(T_{\le t})}X_{uv}$
where $X_{uv}$ is the indicator variable for the event that both endpoints $u,v$
are chosen to $A$. By the definition of $A$ we have that $\Pr[X_{uv}=1]=p^2$ and further,
by Lemma~\ref{l:1}, we know that $|E(T_{\le t})| \le (2t+1)K$. Thus,
\begin{align}
	\mathbb{E}[q_t] & \le  (2t+1)Kp^2 \nonumber
	= (2t+1)\frac{2(k^2+k^{1.9})}{4(k+\frac{3}{4}k^{0.9})^2} \nonumber
	 = \left(t + \frac{1}{2}\right)\frac{k^2+k^{1.9}}{k^2+\frac{3}{2}k^{1.9}+\frac{9}{16}k^{1.8}} \nonumber\\
	& \le \left(t + \frac{1}{2}\right)\left(1-\frac{\frac{1}{2}k^{1.9}}{k^2+\frac{3}{2}k^{1.9}}\right) \nonumber
		 \le \left(t + \frac{1}{2}\right)\left(1-\frac{1}{4k^{0.1}}\right) \nonumber
		 \le t + \frac{1}{2}-\frac{t}{4k^{0.1}} \nonumber
 		\le t -\frac{t}{8k^{0.1}} \;. 
	\end{align}
	We would now like to show that $q_t$ does not deviate much from its expected value, so we estimate its variance.
	As the choice of each vertex to $A$ is made independently, we have that $X_{uv}$ is independent of
	$X_{u'v'}$ whenever $uv$ and $u'v'$ are disjoint edges of $T_{\le t}$. Thus, by Lemma \ref{l:1},
	there are at most $|E(T_{\le t})|(8t+2)\le (2t+1)K(8t+2)$ ordered  pairs $uv,u'v'$ for which
	$X_{uv}$ and $X_{u'v'}$ are not independent. As for each non-independent pair we have $\Pr[X_{uv}=1 \wedge X_{u'v'}=1]=p^3$, we obtain that
	\begin{equation}\label{e:var}
		\text{Var}[q_t] \le \mathbb{E}[q_t] + (2t+1)(8t+2)Kp^3 \le t + 17t^2\cdot(3k^2)\cdot \frac{1}{8k^3}
			\le t + \frac{7t^2}{k}\;.
	\end{equation}
	Consider first the case where $t \le k/7$, for which we have by \eqref{e:var} that $\text{Var}[q_t] \le 2k/7$.
	By Chebyshev's inequality and by our estimate on the expectation of $q_t$ we then get
	\begin{equation}\label{e:var1}
	\mathbb{P}\left(q_t \ge  t-\frac{t}{16k^{0.1}}\right) \le 
	\mathbb{P}\left(q_t - \mathbb{E}[q_t] \ge  \frac{t}{16k^{0.1}}\right) \le \frac{2k/7}{t^2/256k^{0.2}}=O\left(\frac{k^{1.2}}{t^2}\right) = O\left(\frac{1}{k^{0.4}}\right)\;,
\end{equation}
where we are also using that $t \geq k^{0.8}$.
	Consider next the case where $t \ge k/7$, for which we have by \eqref{e:var} that $\text{Var}[q_t] \le 14t^2/k$.
	Now we have
	\begin{equation}\label{e:var2}
	\mathbb{P}\left(q_t \ge  t-\frac{t}{16k^{0.1}}\right) \le 
	\mathbb{P}\left(q_t - \mathbb{E}[q_t] \ge  \frac{t}{16k^{0.1}}\right) \le \frac{14t^2/k}{t^2/256k^{0.2}} =  O\left(\frac{1}{k^{0.8}}\right)\;.
\end{equation}
	As the number of possible choices for $t$ is $\Theta(k^2)$, we cannot just use \eqref{e:var1},  \eqref{e:var2} and the union bound to
	guarantee that $q_t \le  t-\frac{t}{16k^{0.1}}$ holds with high probability for all $t$.
	Instead, we proceed as follows. Let $S$ be the set of integers of the form $t_i = k^{0.8} \left( 1 + 1/32k^{0.1}\right)^i$ which are contained in $[k^{0.8},K]$ - clearly, $|S| \leq k^{0.2}$. We will prove that with probability larger than $2/3$, we have $q_t \le  t-\frac{t}{16k^{0.1}}$ for every $t \in S$. Once we show that, we are done since
	for each $t \in [k^{0,8},K]$, letting $t_i \in S$ be such that $t \le t_i \le t \left(1+1/32k^{0.1}\right)$, we have
	$$
	q_t \le q_{t_i} \le t_i-\frac{t_i}{16k^{0.1}}  \le t+\frac{t}{32k^{0.1}} - \frac{t}{16k^{0.1}} = t-\frac{t}{32k^{0.1}}
	$$
	establishing the lemma. Thus, it remains to apply \eqref{e:var1}, \eqref{e:var2} and the union bound
	to the elements of $S$. Indeed, this follows since $S$ has size at most $k^{0.2}$ and for each $t_i \in S$, $q_{t_i} \leq t_i-\frac{t_i}{16k^{0.1}}$ occurs with probability $O(1/k^{0.4})$. Hence, with probability larger than
	$2/3$, $q_t \le  t-\frac{t}{16k^{0.1}}$ for every $t \in S$.

\textbf{Property (${\rm P}3$).} Fix a pair of vertices $u,v \in V(T)$ for which $|N^+(u) \cap N^-(v)|=t \ge k^{0.7}$.
	Then $Z=|N^+(u) \cap N^-(v) \cap A| \sim \text{Bin}(t,p)$.
	By Chernoff's inequality, it holds then that
	$$
	\Pr\left[Z- tp > t^{2/3} \right] \le e^{-\frac{2t^{4/3}}{t}}  < \frac{1}{15k^4}\;.
	$$
	Hence, the probability that $|N^+(u) \cap N^-(v) \cap A|$ is larger than
	$$
	tp+t^{2/3} \le \frac{t}{2k}+t^{2/3} \le \frac{t}{2k}+t^{3/4}-1
	$$
	is less than $1/(15k^4)$. As there are at most $K^2 \le 5k^4$ choices for pairs $u,v$ to consider, 
	we obtain by the union bound that $({\rm P}3)$ holds with probability larger than $1-5k^4/(15k^4)=2/3$.
\end{proof}

\noindent The proof of Theorem \ref{t:main} follows now from the following lemma.
\begin{lemma}\label{l:5}
	Let $A$ be a subset of $V(T)$ for which $({\rm P}1)$, $({\rm P}2)$, $({\rm P}3)$ hold. Then, there is a copy of $H_k$
	in $T$ whose base vertices are in $A$.
\end{lemma}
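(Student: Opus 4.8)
The plan is to apply Lemma~\ref{lem:greedy}: the task reduces to exhibiting a $k$-element set $A' \subseteq A$, an ordering $v_1, \dots, v_k$ of $A'$, and an ordering $e_1, \dots, e_{\binom{k}{2}}$ of the pairs of $A'$ such that, writing $e_t = (v_i, v_j)$ with $i < j$ and setting $c'(v_i, v_j) := |N^+(v_i) \cap N^-(v_j) \setminus A'|$, one has $c'(e_t) \ge t$ for every $t$. First I would clean up $A$. By $({\rm P}2)$ applied with the integer $\lceil k^{0.8} \rceil$, the number of pairs $u, v$ of $A$ with $c(u,v) < k^{0.8}$ is at most $\lceil k^{0.8}\rceil \le k^{0.8}+1$; choosing one endpoint of each gives a set $U$ with $|U| \le k^{0.8}+1$ meeting all of them, and by $({\rm P}1)$ we have $|A \setminus U| \ge (k+2k^{0.8}) - (k^{0.8}+1) \ge k$, so we may pick $A' \subseteq A \setminus U$ with $|A'| = k$. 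Then every pair of $A'$ satisfies $c(\cdot, \cdot) \ge k^{0.8}$. I would order $A'$ as $v_1, \dots, v_k$ by non-increasing out-degree in $T$, so that $d^+(v_i) \ge d^+(v_j)$ whenever $i < j$; by the remark following Definition~\ref{def:connections} this gives $|N^+(v_i) \cap N^-(v_j)| \ge c(v_i, v_j) - 1$, while trivially $|N^+(v_i) \cap N^-(v_j)| \le c(v_i, v_j)$.

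Next I would turn $({\rm P}3)$ into a lower bound on $c'$. Fix $i < j$ and write $c = c(v_i, v_j) \ge k^{0.8}$ and $s = |N^+(v_i) \cap N^-(v_j)|$, so $c - 1 \le s \le c$ and in particular $s \ge k^{0.7}$; then $({\rm P}3)$ applies and, using $A' \subseteq A$,
\[
c'(v_i, v_j) \;\ge\; s - \frac{s}{2k} - s^{3/4} + 1 \;\ge\; c\Bigl(1 - \frac{1}{2k}\Bigr) - c^{3/4} \;=:\; h(c),
\]
the second inequality using $c-1\le s\le c$. The function $h$ is increasing on $[1, \infty)$, so, since $c \ge k^{0.8}$, every pair of $A'$ already has $c'(v_i, v_j) \ge h(k^{0.8}) \ge k^{0.8}/2$. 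Now order the pairs of $A'$ by non-decreasing value of $c'$; it then suffices to show that for every $1 \le t \le \binom{k}{2}$ the number $N_t$ of pairs of $A'$ with $c' < t$ satisfies $N_t \le t - 1$, since this says precisely that the $t$-th smallest value of $c'$ is at least $t$. For $t \le k^{0.8}/2$ this is immediate because $N_t = 0$. For $t > k^{0.8}/2$, set $\tau = t + 2t^{3/4} + t/k$; as $\tau \le 2t$ one checks that $h(\tau) > t$, so any pair with $c' < t$ has $h(c) < t$ and hence, by monotonicity of $h$, $c < \tau$; thus $N_t$ is at most the number of pairs of $A$ with $c < \tau$. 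If $\tau \le k^{0.8}$ this number is $0$ by the choice of $A'$; otherwise $\lceil \tau \rceil$ is an integer in $[k^{0.8}, K]$, so $({\rm P}2)$ bounds it by $q_{\lceil \tau \rceil} \le \lceil \tau \rceil - \lceil \tau \rceil/(32 k^{0.1}) \le \tau + 1 - t/(32 k^{0.1}) \le t - 1$, the last inequality holding because $t/(32 k^{0.1})$ exceeds $2 t^{3/4} + t/k + 2$ once $t > k^{0.8}/2$. With these choices Lemma~\ref{lem:greedy} yields a copy of $H_k$ whose base vertices form $A' \subseteq A$.

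The step I expect to be the main obstacle is this last verification that $N_t \le t - 1$ across the whole range of $t$, since it requires three distinct mechanisms to cooperate: the bound ``every pair of $A'$ is already well connected'' handles small $t$; the cleanup step is needed for $t$ just above $k^{0.8}$, where the threshold $\tau$ can still lie below $k^{0.8}$, a range in which $A'$ contains no pairs at all; and for larger $t$ one relies on the $1/(32 k^{0.1})$ slack in $({\rm P}2)$ to absorb the $\Theta(t^{3/4})$ loss incurred when passing from $c$ to $c'$ via $({\rm P}3)$. Getting the thresholds $k^{0.7}$, $k^{0.8}$, $K$ and the various constants to line up is the only genuinely delicate point; the remaining estimates are routine.
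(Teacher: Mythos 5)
Your proposal is correct and follows essentially the same route as the paper's proof: discard the few vertices involved in pairs with $c < k^{0.8}$ using $({\rm P}1)$ and $({\rm P}2)$, order $A'$ by non-increasing out-degree so that $|N^+(v_i)\cap N^-(v_j)| \ge c(v_i,v_j)-1$, order the pairs by connectivity, and let the $t/(32k^{0.1})$ slack in $({\rm P}2)$ absorb the $O(t^{3/4})$ loss from $({\rm P}3)$ when verifying the hypothesis of Lemma~\ref{lem:greedy}. The only differences are cosmetic bookkeeping (you sort pairs by $c'$ rather than by $c$ and introduce the explicit threshold $\tau$), plus one harmless slip where ``the number of pairs of $A$ with $c<\tau$'' should read ``of $A'$'' in the case $\tau \le k^{0.8}$.
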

\begin{proof}
	Let $A^* \subseteq A$ be those vertices $u$ of $A$ for which $c(u,v) \le k^{0.8}$ for some
	$v \in A$. Since $({\rm P}2)$ holds, we have that $|A^*| \le 2k^{0.8}$ as there are at most
	$k^{0.8}$ (in fact, at most $k^{0.8}-k^{0.8}/(32k^{0.1})$) pairs $u,v$ of distinct vertices of $A$ with $c(u,v) \le k^{0.8}$. Moreover, since $({\rm P}1)$ holds, we have that $|A| \ge k+2k^{0.8}$ and
	so there is a subset $A' \subseteq A \setminus A^*$ with $|A'|=k$ vertices.
	We will prove that there is an $H_k$ copy in $T$ whose set of base vertices is $A'$.

	Consider an ordering $A'=\{v_1,\ldots,v_k\}$ satisfying that $d^+(v_i) \ge d^+(v_j)$ for all $1 \le i < j \le k$. 
Let also $S$ be a total ordering of the $\binom{k}{2}$ pairs $\{v_i,v_j\}$ with $1 \le i < j \le k$
	where $\{v_i,v_j\}$ precedes $\{v_{i'},v_{j'}\}$ in $S$ implies that
	$c(v_i,v_j) \le c(v_{i'},v_{j'})$.
	Let us now show that $A'$ together with the ordering $S$ of the pairs satisfy the conditions of Lemma~\ref{lem:greedy}, i.e. that for the $\ell$-th pair $\{v_i,v_j\}$ in $S$, it holds that $|N^+(v_i)\cap N^-(v_j)\setminus A|\geq \ell$. This would give the desired copy of $H_k$ with base set $A'$, thus completing the proof.

	Consider the $\ell$-th element of $S$, and suppose it is $\{v_i,v_j\}$ where
	$i < j$. Now, suppose that $c(v_i,v_j)=t$. First, observe that $t \ge k^{0.8}$ as we have already removed $A^*$. By the definition of $S$, we have that $q_t \ge \ell$. But on the other hand, since $({\rm P}2)$ holds, we must have $q_t \le t-t/(32k^{0.1})$. We therefore have that $\ell \le t-t/(32k^{0.1})$ which implies
	$$
	c(u,v) = t \ge \ell + \frac{\ell}{32k^{0.1}}\;.
	$$
	Consider now the set $N^+(v_i) \cap N^-(v_j)$ and let $r := |N^+(v_i) \cap N^-(v_j)|$. Since $d^+(v_i) \ge d^+(v_j)$, we have that
	$r \ge c(v_i,v_j)-1=t-1 \ge k^{0.8}-1 \ge k^{0.7}$.
	Since $({\rm P}3)$ holds, we have that
	$|N^+(u) \cap N^-(v) \cap A| \le r/(2k) + r^{3/4} -1$.
	We therefore have that
	\begin{align*}
		|(N^+(u) \cap N^-(v)) \setminus A| & \ge r - (r/(2k) + r^{3/4} -1) = r\left(1-\frac{1}{2k}-\frac{1}{r^{1/4}}\right)	+ 1 \ge t\left(1-\frac{1}{2k}-\frac{1}{k^{0.2}}\right)\\
		& \ge t\left(1-\frac{2}{k^{0.2}}\right) \ge \left(\ell + \frac{\ell}{32k^{0.1}}\right)\left(1-\frac{2}{k^{0.2}}\right) \ge \ell\;.
	\end{align*}
This completes the proof.
\end{proof}
\section{Concluding remarks}
In this paper we confirmed the conjecture of Gir\~ao, Snyder and Popielarz stating that the oriented Ramsey number of the $1$-subdivision of the transitive tournament is linear. In particular, we show that the necessary size of a tournament which forces such a $1$-subdivision is larger by at most a factor of $4+o(1)$ than the trivial lower bound of $\binom{k}{2} + k$, which can be obtained by noting that this is precisely the number vertices in the $1$-subdivision.
In turn, our proof cannot give a tight bound - this is because the bound on $c(u,v)$ in Lemma~\ref{l:0} is tight, i.e. there exist tournaments (namely, those which are doubly-regular, see \cite{doubreg}) for which we know that $c(u,v) = \frac{n-3}{4}$ for every pair of vertices $u,v$. Therefore, if $n < 2k^2$, we cannot use Lemma~\ref{lem:greedy} as an embedding strategy since for each $n/4 \leq t \leq \binom{k}{2}$, there will not exist pairs $u,v$ with $|N^+(u) \cap N^-(v)| \geq t$. Despite this, it is natural to ask whether indeed a 'spanning' behaviour for this problem is true at least in an asymptotic form, i.e., if the oriented Ramsey number is $k^2/2 + o(k^2)$.

\end{document}